\newcounter{nofigs}
\def\utr{\, \underline{\triangleright}\, }
\def\otr{\, \overline{\triangleright}\, }
\newtheorem{theorem}{Theorem}
\newtheorem{proposition}[theorem]{Proposition}
\theoremstyle{definition}
\newtheorem{example}{Example}
\newtheorem{definition}{Definition}
\date{}
\title{\Large \textbf{Biquandle Brackets and Knotoids}}
\author{Neslihan G\"ug\"umc\"u\footnote{Email:nesli@central.ntua.gr }
 \and
Sam Nelson\footnote{Email: Sam.Nelson@cmc.edu. Partially supported by 
Simons Foundation collaboration grant 316709}\and
Natsumi Oyamaguchi\footnote{Email: natsumi.3-29.math@diary.ocn.ne.jp}}
\begin{document}
\maketitle

\begin{abstract}
\textit{Biquandle brackets} are a type of quantum enhancement of the 
biquandle counting invariant for oriented knots and links, defined by
a set of skein relations with coefficients which are functions of biquandle
colors at a crossing. In this paper we use biquandle brackets to enhance the
biquandle counting matrix invariant defined by the first two authors in
\cite{GN}. We provide examples to illustrate the method of calcuation and to
show that the new invariants are stronger than the previous ones.
\end{abstract}

\parbox{5.5in} {\textsc{Keywords:} Knotoids, biquandles, enhancements of 
counting invariants, biquandle brackets, quantum enhancements

\section*{Acknowledgement}
The first author thanks cordially to Oberwolfach Institute for Mathematics for the peaceful environment provided for completing her contribution to the paper while her stay as a Leibniz Fellow. 
\smallskip

\textsc{2010 MSC:} 57M27, 57M25}

\section{\large\textbf{Introduction}}\label{I}

Introduced in \cite{T}, \textit{knotoids} are a generalization of tangles 
in which the endpoints can lie any region of the planar complement of the
tangle and are not permitted to move over or under other strands, instead 
remaining confined to the region of the plane in which they start. Knotoids
have been the subject of much recent study; see for example \cite{GK1, GL1, AHKS, GGLKS}.

In \cite{FRS}, algebraic structures known as \textit{biquandles} were 
introduced and used in later work to define invariants of oriented knots
and links via coloring; see \cite{EN} for more details. 
In \cite{GN}, the first two authors introduced biquandle colorings of knotoids.
In particular, the regional confinement of knotoid endpoints enables the
arrangement of biquandle colorings numbers into a matrix-valued invariant
which is stronger than the counting invariant alone. More precisely, the sum
of the entries in the matrix gives the total number of colorings, but 
the distribution of coloring numbers within the matrix is also invariant
under Reidemeister moves and can distinguish knotoids with equal numbers of 
colorings.

In \cite{NOR}, the second author and collaborators introduced 
\textit{biquandle brackets}, skein invariants for biquandle-colored knots 
and links, and used them to define an infinite family of oriented link 
invariants which include the classical quantum invariants such as the 
Alexander-Conway, Jones, HOMFLYPT and Kauffman polynomials on the one hand 
and the quandle and biquandle 2-cocycle invariants defined in \cite{CJKLS}
and other recent work on the other hand as special cases. In \cite{NO}, the 
second two authors defined a graphical calculus known as
\textit{trace diagrams} for computing biquandle brackets recursively as opposed
to using the state-sum approach. For an overview of biquandle brackets, see
\cite{N}.

In this paper we extend biquandle brackets to the case of knotoids, in 
particular enhancing the biquandle coloring matrix with biquandle brackets
to obtain a matrix-vlaued biquandle bracket invariant of knotoids. The paper is
organized as follows. In Section \ref{K} we review the basics of knotoid theory.
In Sections \ref{Bi} and  \ref{B} we review the basics of biquandles and 
biquandle brackets. In Section \ref{BBK} we introduce the new invariants and 
give some computational
examples, in particular demonstrating that the new invariant is a proper
generalization of the previous invariants. In Section \ref{Q} we conclude with
some open questions for future research.

\section{\large{\textbf{Knotoids}}}\label{K}

A \textit{knotoid diagram} $K$ is a generic immersion of $[0,1]$ into an orientable surface $\Sigma$ with a finite number of double points that are transversal and endowed with over/under information and called \textit{crossings} of $K$.  The images of $0$ and $1$ are regarded as the endpoints of $K$, and are called the \textit{tail} and the \textit{head} of $K$. The endpoints of $K$ are distinct from each other and from any of the crossings of $K$. A knotoid is always oriented from tail to head. 
\[\includegraphics{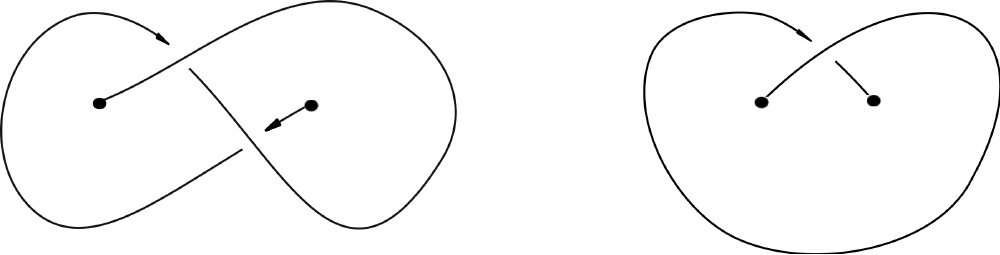}\]

Two knotoid diagrams in the surface $\Sigma$ are considered to be \textit{equivalent} if they are related to each other by a finite sequence of Reidemeister moves which take place in local disks free of endpoints and the isotopy of the surface $\Sigma$.  A \textit{knotoid} in $\Sigma$ is then considered to be an equivalence class of equivalent knotoid diagrams in $\Sigma$.  Specifically, a knotoid in $S^2$ is called a \textit{spherical knotoid} and a knotoid in $\mathbb{R}^2$ is called a \textit{planar knotoid}. 

One of the features that makes knotoids interesting is that considering them in $S^2$ and $\mathbb{R}^2$  yields two different theories unlike knots \cite{T}.  This is due to the Whitney flip moves enabled in $S^2$; the classification of knotoids gets coarser in $S^2$ by this type of moves. The reader can enjoy verifying that the two knotoids depicted above 
 are two nontrivial knotoids in $\mathbb{R}^2$ but happens to be equivalent and trivial in $S^2$. 

The connected sum operation on classical knots extends to knotoids \cite{T}. Two knotoids $K_1, K_2$ in $\Sigma_1, \Sigma_2$  can be summed up by tying the head of $K_1$ to the tail of $K_2$ through an orientation reversing homeomorphism between $\Sigma_1-D_1$ and $\Sigma_2-D_2$, where $D_1, D_2$ are sufficiently small neighborhoods containing the head of $K_1$ and the tail of $K_2$, respectively.  In particular, when the $\Sigma_1=\Sigma_2=S^2$, the set of knotoids in $S^2$ carry a monoid structure with the connected sum operation. We call a knotoid in $S^2$ \textit{prime}  if it is the connected sum of only itself and the trivial knotoid. A \textit{composite} knotoid is a connected sum  of a finite number of non-trivial prime knotoids. 

Knotoids in $S^2$ and $\mathbb{R}^2$ can be regarded as $\theta$-graphs and open-ended smooth curves embedded in $\mathbb{R}^3$, respectively. Turaev showed that there is a bijection between the set of knotoids in $S^2$ and the set of isotopy classes of $\theta$-graphs \cite{T}. In \cite{T} it is shown that every composite knotoid has a unique prime knotoid decomposition up to knotoid equivalence through this interpretation \cite{Ma}. The first author and Kauffman showed a $1-1$ correspondence between knotoids lying in a plane and line isotopy classes of open ended smooth curves embedded in $\mathbb{R}^3$ \cite{GK1}. During the line isotopy, the endpoints of the curve are assumed to remain on the two lines passing through the endpoints and are orthogonal to plane of the knotoids and the rest of the curve is subject to the ambient isotopy of the complementary space to the lines.  Application of knotoids for  a topological classification of proteins and polymers  \cite{GGLKS, GS} is realized through this interpretation.

\section{\large\textbf{Biquandles}}\label{Bi}
We begin with a definition (see \cite{EN} for more).

\begin{definition}
Let $X$ be a set. A \textit{biquandle structure} on $X$ is a
pair of maps $\utr,\otr:X\times X\to X$ satisfying
\begin{itemize}
\item[(i)] For all $x\in X$, $x\utr y=x\otr y$,
\item[(ii)] The maps $\alpha_y,\beta_y:X\to X$for all $y\in X$ 
and $S:X\times X\to X\times X$ defined by
\[\alpha_y(x)=x\otr y,\quad \beta_y(x)=x\utr y\quad \mathrm{and}\quad
S(x,y)=(y\otr x, x\utr y)\]
are bijective, and
\item[(iii)] For all $x,y,z\in X$ we have the \textit{exhcange laws}:
\[
\begin{array}{rcl}
(x\utr y)\utr (z\utr y)& = & (x\utr z)\utr(y\otr z) \\
(x\otr y)\utr (z\otr y)& = & (x\utr z)\otr(y\utr z) \\
(x\otr y)\otr (z\otr y)& = & (x\otr z)\otr(y\utr z).
\end{array}
\] 
\end{itemize}
We note that axiom (ii) is equivalent to the \textit{adjacent labels rule},
which says that in the ordered quadruple $(x,y,x\utr y,y\otr x)$, any two 
neighboring entries (including $(y\otr x, x)$ determine the other two.
A \textit{biquandle} is a set $X$ with a choice of biquandle structure.
\end{definition}

\begin{example}
Any $\mathbb{Z}[t^{\pm 1,s^{\pm 1}}]$-module has a biquandle structure known as
an \textit{Alexander biquandle} defined by
\[x\utr y = tx+(s-t)y \quad\mathrm{and}\quad  x\otr y = sx.\]
In particular, a choice of units $t,s\in\mathbb{Z}_n$ defines an
Alexander biquandle structure on $\mathbb{Z}_n$.
\end{example}

\begin{example}
We can define biquandle structures on a finite set $X=\{1,2,3,\dots, n\}$
by specifying the operation tables of $\utr$ and $\otr$. For example,
thinking of $\mathbb{Z}_5=\{1,2,3,4,5\}$ (with the class of zero represented
by 5 so our row/column numbers can start at 1), the Alexander biquandle
structure on $X$ determined by $s=2$ and $t=3$, i.e.,
\[x\utr y = 3x+4y, \quad x\otr y =2\]
is expressed by the operation tables
\[
\begin{array}{r|rrrrr} 
\utr & 1 & 2 & 3 & 4 & 5 \\\hline
1 &    2 & 1 & 5 & 4 & 3 \\
2 &    5 & 4 & 3 & 2 & 1 \\
3 &    3 & 2 & 1 & 5 & 4 \\
4 &    1 & 5 & 4 & 3 & 2 \\
5 &    4 & 3 & 2 & 1 & 5 \\
\end{array}\quad
\begin{array}{r|rrrrr}
\otr & 1 & 2 & 3 & 4 & 5 \\\hline
1 & 2 & 2 & 2 & 2 & 2 \\ 
2 & 4 & 4 & 4 & 4 & 4 \\
3 & 1 & 1 & 1 & 1 & 1 \\
4 & 3 & 3 & 3 & 3 & 3 \\ 
5 & 5 & 5 & 5 & 5 & 5.
\end{array}
\]
\end{example}

\begin{definition}
Let $X$ be a biquandle and and let $K$ be a knotoid diagram in $S^2$. A 
\textit{biquandle coloring} of $K$ is an assignment of elements of
$X$ to each semiarc in $K$ such that the following
conditions are satisfied at every crossing:
\[\includegraphics{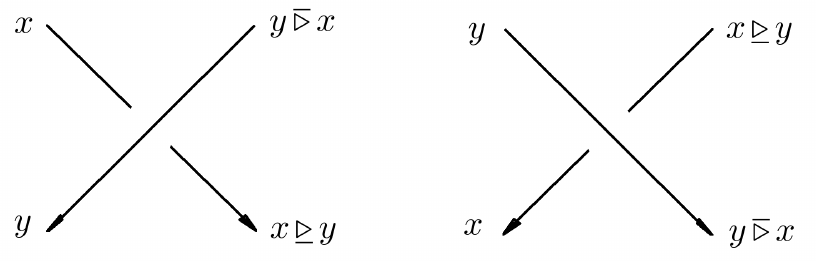}\]
\end{definition}

The biquandle axioms are chosen such that given a biquandle coloring of
one side of a Reidemeister move, there is a unique biquandle coloring of the
other side of the move which agrees on the boundary of the neighborhood of
the move. It follows that the number of biquandle colorings is a knotoid 
invariant, computable from any diagram $K$ of our knotoid. 
Moreover, in \cite{GN}, the first two listed authors observed that there 
is no Reidemeister move which can change the colors of the head and tail 
semiarcs; it then follows that that the \textit{biquandle coloring matrix}, 
whose entry in row $j$ column $k$ (where $X=\{1,2,\dots, n\}$) is the number 
of colorings with tail semiarc colored $j$ and head colored $k$, is an
invariant of knotoids.

\begin{example}
Let $X=\{1,2,3\}$ and consider the biquandle structure on $X$ given by the
operation tables
\[
\begin{array}{r|rrr} 
\utr & 1 & 2 & 3 \\\hline
1 &    2 & 1 & 3 \\
2 &    1 & 3 & 2 \\
3 &    3 & 2 & 1 \\
\end{array}\quad
\begin{array}{r|rrr}
\otr & 1 & 2 & 3 \\\hline
1 & 2 & 2 & 2 \\
2 & 3 & 3 & 3 \\
3 & 1 & 1 & 1.
\end{array}
\]
\[\includegraphics{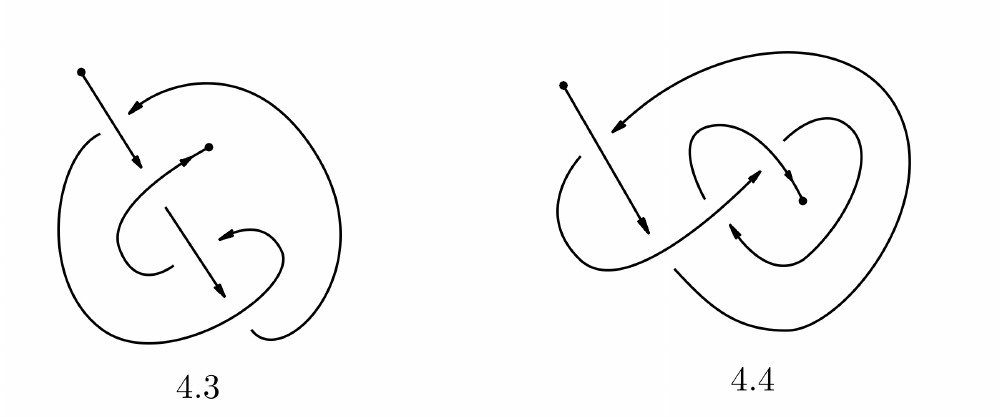}\]
The unknotoid has three $X$-colorings. 
The knotoid numbered $4.4$ in \cite{BA} has no $X$-colorings, distinguishing it from the
unknotoid. The knotoid $4.3$ has three $X$-colorings
\[\includegraphics{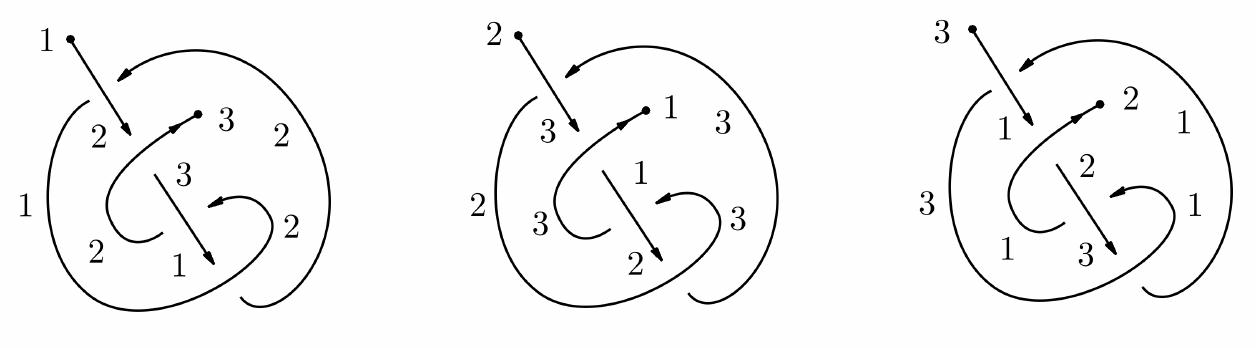}\] 
but its coloring matrix value 
\[\left[\begin{array}{rrr}
0 & 0 & 1\\ 
1 & 0 & 0 \\
0 & 1 & 0
\end{array}\right]\ne
\left[\begin{array}{rrr}
1 & 0 & 0 \\ 
0 & 1 & 0 \\
0 & 0 & 1\\ 
\end{array}\right]\] distinguishes it from the unknotoid, which has
coloring matrix given by the $3\times 3$ identity matrix.
\end{example}

One justification for arranging the coloring numbers as a matrix
is given by the following result:
\begin{proposition}
Let $L_1 \# L_2$ denote the connected sum of the knotoids $L_1$ and $L_2$ 
in $S^2$.  Then for any biquandle
$X$, the coloring matrix of $L_1 \# L_2$ is the matrix product of the
coloring matrices of $L_1$ and $L_2$. More precisely, if we denote the
coloring matrix of $L$ as $M_X(L)$ then we have
\[M_{L_1\#L_2}=M_{L_1}M_{L_2}.\]

\end{proposition}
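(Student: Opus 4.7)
The plan is to exhibit a color-preserving bijection between colorings of $L_1 \# L_2$ and compatible pairs of colorings of $L_1$ and $L_2$, and then read off the matrix product formula from that bijection.

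First I would unpack the connected sum construction: $L_1 \# L_2$ is obtained by splicing the head semiarc of $L_1$ to the tail semiarc of $L_2$, so that no new crossings are introduced. Consequently, a biquandle coloring of $L_1 \# L_2$ restricts to biquandle colorings of $L_1$ and $L_2$ separately, with the sole compatibility requirement that the color assigned to the head semiarc of $L_1$ equals the color assigned to the tail semiarc of $L_2$ (since these semiarcs have been identified, or joined by an arc with no crossings). Conversely, given any coloring of $L_1$ and any coloring of $L_2$ whose head-of-$L_1$ and tail-of-$L_2$ colors agree, we can glue them to obtain a well-defined coloring of $L_1 \# L_2$, because the crossing conditions to be checked are exactly those of $L_1$ and $L_2$ individually.

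Now fix colors $j, k \in X$. By the observation recalled from \cite{GN} that the head and tail colors are preserved under Reidemeister moves and hence well-defined invariants of each coloring, a coloring of $L_1 \# L_2$ with tail colored $j$ and head colored $k$ is, under the bijection above, a pair of colorings whose $L_1$-tail is $j$, whose $L_2$-head is $k$, and whose $L_1$-head equals the $L_2$-tail; this shared color may be any $i \in X$. Summing over $i$ yields
\[
(M_{L_1 \# L_2})_{j,k} = \sum_{i=1}^n (M_{L_1})_{j,i}\,(M_{L_2})_{i,k} = (M_{L_1} M_{L_2})_{j,k},
\]
which is the desired identity.

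The main (modest) obstacle is justifying that gluing really produces a valid biquandle coloring, i.e., that no extra axiom needs to be checked at the identification point. This is fine because the connected sum is performed along an arc free of crossings, so the only local condition is that the two semiarcs agree in color, which is precisely the compatibility requirement in the bijection. No biquandle operation is applied along the gluing arc, so no further constraint arises.
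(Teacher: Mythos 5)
Your proposal is correct and matches the paper's own argument: both decompose a coloring of $L_1\#L_2$ according to the color of the spliced semiarc (the head of $L_1$ identified with the tail of $L_2$) and sum over that shared color to recover the matrix product. Your version simply spells out the bijection and the gluing compatibility in more detail than the paper does.
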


\begin{proof}
Each $X$-coloring of $L_1\#L_2$ has some biquandle element $k\in X$
on the semiarc which splits to become the head of $L_1$ and the tail
of $L_2$. Thus, the number of biquandle colorings of $L_1\#L_2$ starting with
color $j$ and ending with color $l$ is the sum over $k\in X$ of the number
of colorings of $L_1$ starting with $j$ and ending with $k$ times the
number of colorings of $L_2$ starting with $k$ and ending with $l$,
i.e., the dot product of the $j$th row of 
$M_{L_1}$ with the $l$th column of $M_{L_2}$.

\end{proof}

\section{Biquandle Brackets}\label{B}
Next we review biquandle brackets. We begin with a definition from
\cite{NOR}.

\begin{definition}
Let $X$ be a biquandle and $R$ a commutative ring with identity. A 
\textit{biquandle bracket structure} or \textit{$X$-bracket} on $R$ consists 
of maps $A,B:X\times X\to R^{\times}$ assigning units $A_{x,y}, B_{x,y}$ in $R$
to each ordered pair $(x,y)$ of elements of $X$ subject to the following
conditions:
\begin{itemize}
\item[(i)] For all $x\in X$, the elements $-A_{x,x}^2B_{x,x}^{-1}$ are equal,
with their common value denoted by $w$,
\item[(ii)] For all $x,y\in X$, the elements 
$-A_{x,y}B_{x,y}^{-1}-A_{x,y}^{-1}B_{x,y}$ are equal, with their common value
denoted by $\delta$, and
\item[(iii)] For all $x,y,z\in X$, the five equations
\[\begin{array}{rcl}
A_{x,y}A_{y,z}A_{x\utr y,z\otr y} & = & A_{x,z}A_{y\otr x,z\otr x}A_{x\utr z,y\utr z} \\
A_{x,y}B_{y,z}B_{x\utr y,z\otr y} & = & B_{x,z}B_{y\otr x,z\otr x}A_{x\utr z,y\utr z} \\
B_{x,y}A_{y,z}B_{x\utr y,z\otr y} & = & B_{x,z}A_{y\otr x,z\otr x}B_{x\utr z,y\utr z} \\
A_{x,y}A_{y,z}B_{x\utr y,z\otr y} & = & 
A_{x,z}B_{y\otr x,z\otr x}A_{x\utr z,y\utr z} 
+A_{x,z}A_{y\otr x,z\otr x}B_{x\utr z,y\utr z} \\ 
& & +\delta A_{x,z}B_{y\otr x,z\otr x}B_{x\utr z,y\utr z} 
+B_{x,z}B_{y\otr x,z\otr x}B_{x\utr z,y\utr z} \\
B_{x,y}A_{y,z}A_{x\utr y,z\otr y} 
+A_{x,y}B_{y,z}A_{x\utr y,z\otr y} & & \\
+\delta B_{x,y}B_{y,z}A_{x\utr y,z\otr y} 
+B_{x,y}B_{y,z}B_{x\utr y,z\otr y}  
& = & B_{x,z}A_{y\otr x,z\otr x}A_{x\utr z,y\utr z}. \\
\end{array}\]
are satisfied.
\end{itemize}
\end{definition}

Given a finite biquandle structure on $X=\{1,2,\dots, n\}$ and a ring $R$, 
we can specify a biquandle bracket structure $\beta$ with a block matrix 
$[A|B]$ whose entries tell us $A_{x,y}$ and $B_{x,y}$ for $x,y\in X$.

A biquandle bracket defines an $R$-valued skein invariant of $X$-colored
oriented knots and links under the following skein relations.
\[\includegraphics{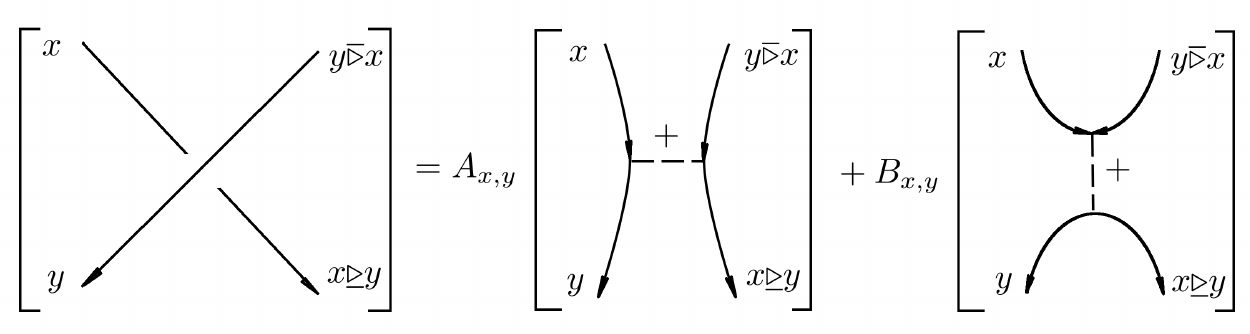}\]
\[\includegraphics{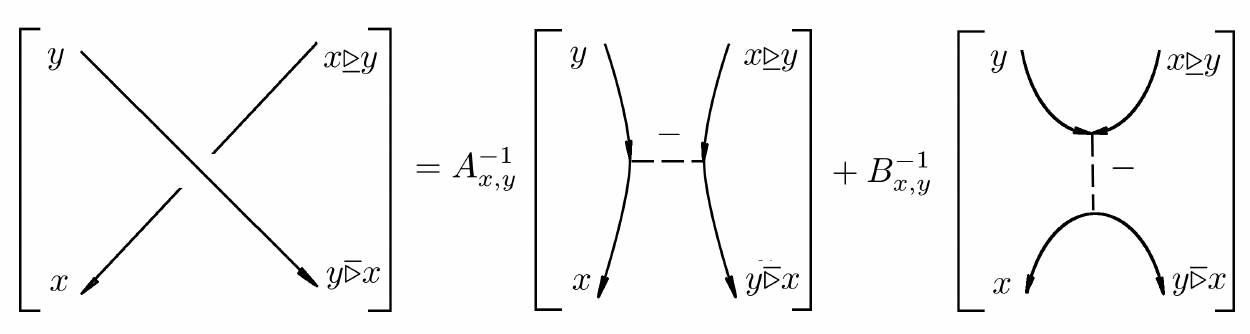}\]
Applying these skein relations at every crossing, we obtain an $R$-linear 
combination of \textit{trace diagrams}, trivalent graphs with certain 
distinguished edges called \textit{traces} marked with signs according to 
crossing types and indicating
the sites of smoothing. Replacing each trace diagram with $\delta^cw^{n-p}$
where $c$ is the number of circular components (called \textit{Kauffman 
states}) after deleting the traces and 
$n-p$ is the number of negative signed traces minus the number of
positive signed traces yields an invariant of $X$-colored Reidemeister moves,
denoted $\beta(L_c)$ where $L_c$ is a biquandle-colored knot or link.
Then the \textit{biquandle bracket polynomial} of an oriented knot $L$
is the sum over the set $\mathcal{C}(L,X)$ of $X$-colorings of $L$ of 
contributions $u^{\beta(L_c)}$, i.e.,
\[\Phi_X^{\beta}(L)=\sum_{L_c\in \mathcal{C}(L,X)} u^{\beta(L_c)}.\]
This polynomial is an invariant of oriented knots and links for each 
biquandle $X$ and biquandle bracket $\beta$ over each commutative ring $R$; 
this infinite family of oriented link invariants includes the classical quantum
invariants (Alexander-Conway, Jones, HOMFLYPT, Kauffman polynomials) and
biquandle $2$-cocycle invariants as special cases, but also includes
other invariants. It is perhaps worth noting that the original version of 
biquandle brackets defined in \cite{NOR} did not use traces, but only the 
state-sum formulation in which all smoothings are done at once; trace diagrams 
were introduced in \cite{NO} to allow for recursive computation of biquandle
brackets, subject to some restrictions on moving strands past traces.
See \cite{NO} for more.

\section{\large\textbf{Biquandle Brackets and Knotoids}}\label{BBK}

To generalize biquandle brackets from knots to knotoids, there are two
important points. First, the set of smoothed states after deleting traces 
now includes open-ended components as well as closed loops. The simplest 
option is to treat these open-ended component sthe same as the loop components,
i.e. assign it a value of $\delta$ as well. Secondly, instead of
simply summing the contributions of $u^{\beta(K_f)}$ over the set of $X$-colorings
$K_f$ of our knotoid $K$, we will sum these contributions as entries
in the biquandle coloring matrix of the knotoid. More precisely, we have:

\begin{definition}
Let $K$ be an oriented knotoid, $X=\{1,2,\dots, n\}$ a finite biquandle, 
$R$ a commutative ring
with identity and $\beta$ an $X$-bracket over $R$. We define the 
\textit{biquandle bracket matrix} of $K$ with respect to $\beta$ to be the
matrix 
\[\Phi_X^{\beta}(K)=\left[\begin{array}{rrr} 
\beta_{11} & \dots & \beta_{1n} \\
\vdots & \ddots &\vdots \\
\beta_{n1} & \dots & \beta_{nn} \\
\end{array}\right]\]
where
\[\beta_{jk}=\sum_{f\in H_{jk}} u^{\beta(f)}\]
and $H_{jk}$ is the set of biquandle colorings of $K$ with tail color $j$
and head color $k$.
\end{definition}

By construction, we have our main result:
\begin{proposition}
$\Phi_X^{\beta}(K)$ is an invariant of knotoids.
\end{proposition}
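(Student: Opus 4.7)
The plan is to reduce the invariance of $\Phi_X^{\beta}(K)$ to two results already available in the literature: the Reidemeister invariance of the biquandle bracket $X$-coloring sum for knots and links proved in \cite{NOR}, and the invariance of the biquandle coloring matrix of knotoids with entries indexed by tail and head colors, proved in \cite{GN}. The only genuinely new ingredient that must be handled is the convention that an open-ended component of a Kauffman state contributes a factor of $\delta$ exactly as a closed loop does.

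The key structural observation I would use is that, for any knotoid diagram $K$, every Kauffman state obtained by resolving all crossings and deleting the traces consists of exactly one open arc running from the tail to the head, together with finitely many disjoint simple closed curves. Because a Reidemeister move on a knotoid occurs within a disk $D\subset\Sigma$ disjoint from both endpoints, the passage between matched states on the two sides of the move is supported in $D$ and thus creates or destroys precisely the same number of local closed curves as the analogous move on a knot diagram. In particular the open arc remains an open arc in every state on either side of the move, so the extra factor of $\delta$ it contributes appears identically on both sides and drops out of the comparison.

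With these ingredients the argument assembles as follows. For each Reidemeister move I fix an $X$-coloring of one side with tail color $j$ and head color $k$; by \cite{GN} there is a unique colored diagram on the other side that agrees with it on the boundary of the move disk, and this matching gives a bijection $H_{jk}\to H'_{jk}$. For each matched pair $(f,f')$, axioms (i)--(iii) together with the observation above let me invoke the state-sum comparisons of \cite{NOR} verbatim to conclude that $\beta(f)=\beta(f')$, and therefore $u^{\beta(f)}=u^{\beta(f')}$. Summing over $f\in H_{jk}$ then yields $\beta_{jk}=\beta'_{jk}$, which is precisely what is required. The main obstacle I expect is bookkeeping rather than substance: in each of the RI, RII, and RIII pictures one must verify carefully that the local recombination of strands inside the disk treats an open arc no differently from a closed loop. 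Because the endpoints of $K$ never lie inside the move disk, this reduces to the check that the loop-count formulas of \cite{NOR} remain valid when one component in the state is open, which is immediate from the fact that the open arc's endpoints are outside the disk and so are unaffected by the local surgery.
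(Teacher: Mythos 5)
Your argument is correct and is precisely the reasoning the paper leaves implicit: the paper's entire proof is the phrase ``by construction,'' deferring to the Reidemeister-invariance arguments of \cite{NOR} and \cite{GN} exactly as you do. Your key observation---that every Kauffman state of a knotoid has exactly one open arc whose endpoints lie outside the move disk, so any component wholly contained in the disk is closed and the state-sum comparisons of \cite{NOR} apply verbatim---is the one genuinely new point, and you handle it correctly.
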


\begin{example}
Let us illustrate the process of computation of the invariant. Let 
$X=\{1,2,3\}$ be the biquandle with operation tables
\[
\begin{array}{r|rrr}
\utr & 1 & 2 & 3 \\ \hline
1 & 2 & 1 & 3 \\
2 & 1 & 3 & 2 \\
3 & 3 & 2 & 1
\end{array}
\quad\mathrm{and}\quad
\begin{array}{r|rrr}
\otr & 1 & 2 & 3 \\ \hline
1 & 2 & 2 & 2 \\
2 & 3 & 3 & 3 \\
3 & 1 & 1 & 1 
\end{array}.
\]
Then $X$ has biquandle bracket values with $R=\mathbb{Z}_5$ coefficients
including
\[\beta=
\left[\begin{array}{rrr|rrr}
1 & 2 & 4 & 4 & 3 & 1 \\
1 & 1 & 2 & 4 & 4 & 3 \\
4 & 4 & 1 & 1 & 1 & 4
\end{array}\right].\]
This data encodes $3^2=9$ pairs of skein relations at positive and negative 
crossings with different biquandle colorings; for example, the 
$(1,2)$ positions $A_{1,2}=2$ and $B_{1,2}=3$ 
(so $A_{1,2}^{-1}=3$ and $B_{1,2}^{-1}=2$) specify the skein relations
\[\includegraphics{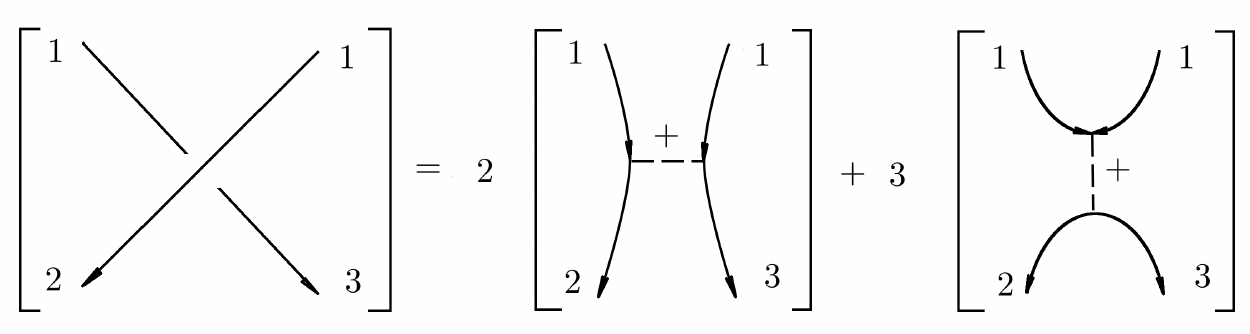}\]
and
\[\includegraphics{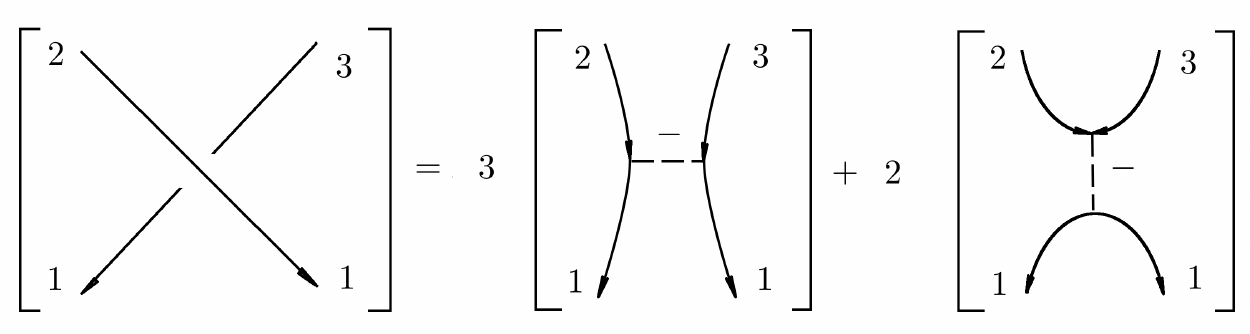}.\]
We also have $\delta=-A_{11}^{-1}B_{11}-A_{11}B_{11}^{-1}=-1(4)-1(4)=2$ and 
$w=-A_{11}^2B_{11}^{-1}=-1^2(4)=1$.

The knotoid $3.1$ has three colorings by $X$, the same number as the unknotoid.
\[\includegraphics{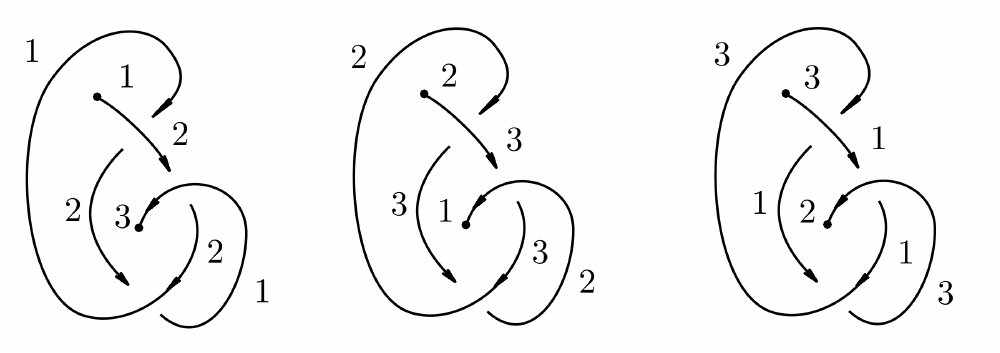}\]
Taking the first coloring, let us compute its $\beta$-value. There are $2^3=8$
smoothed states, each contributing its product of smoothing coefficients, power of $\delta$ and power of $w$ to the $\beta$-value.
\[\includegraphics{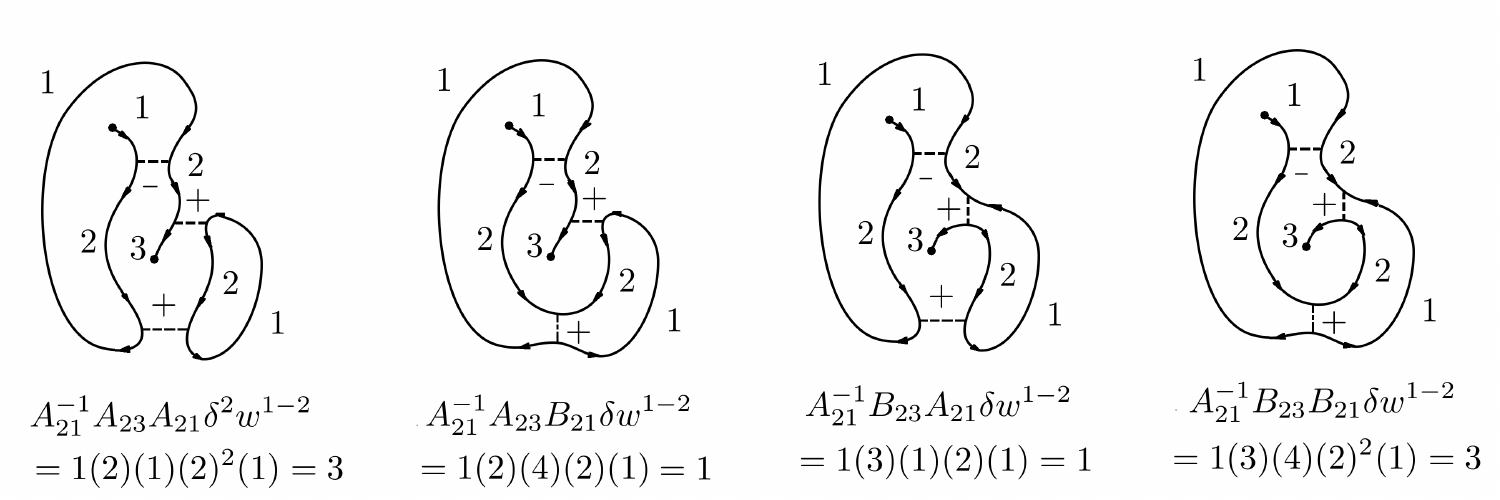}\]
\[\includegraphics{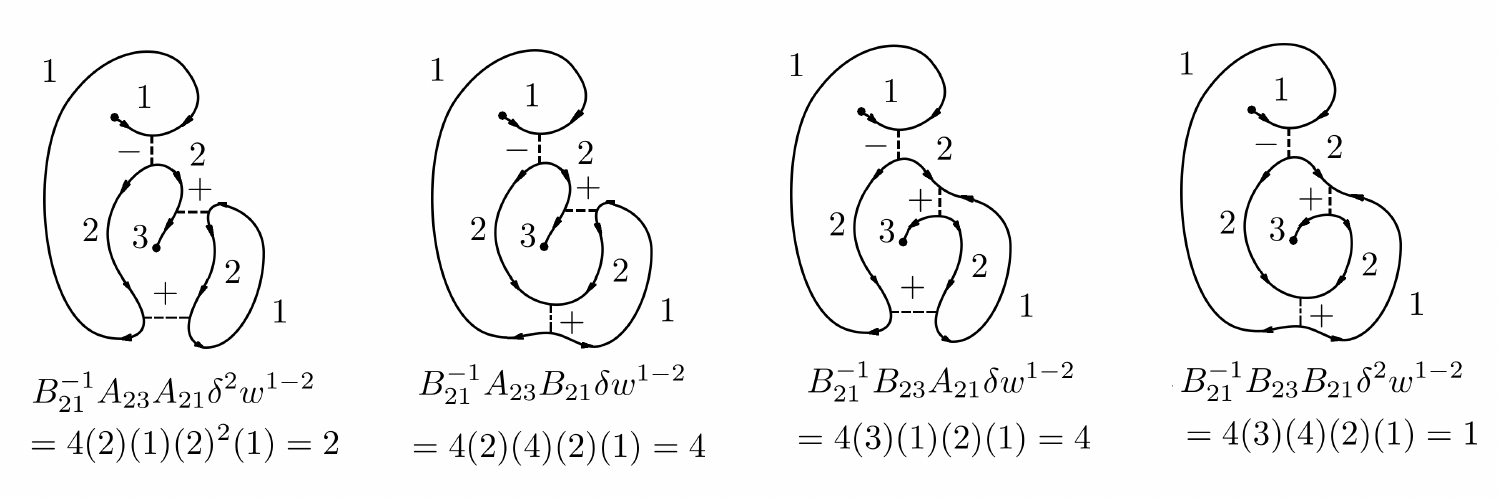}\]
Then this coloring has $\beta$-value $3+1+1+3+2+4+4+1=4$ and contributes $u^4$
to the entry in row 1 column 3 of the invariant matrix. Repeating for the other
colorings, we obtain invariant value
\[\Phi_X^{\beta}(3.1)=\left[\begin{array}{rrr}
0 & 0 & u^4 \\
u^3 & 0 & 0 \\
0 & u^4 & 0 \\
\end{array}\right].\]
We note that from this matrix we can obtain the counting matrix by 
specializing $u=1$ and the biquandle bracket polynomial by summing the entries
of the matrix.
\end{example}

\begin{example}
Let $X$ be the biquandle structure on $\{1,2,3\}$ given by the operation tables
\[\begin{array}{r|rrr}
\utr & 1 & 2 & 3\\ \hline
1 & 2 & 3 & 1 \\
2 & 3 & 1 & 2 \\
3 & 1 & 2 & 3
\end{array}
\quad 
\begin{array}{r|rrr}
\otr & 1 & 2 & 3\\ \hline
1 & 2 & 2 & 2 \\
2 & 1 & 1 & 1 \\
3 & 3 & 3 & 3
\end{array}
\]
and let $\beta$ be the $X$-bracket over $\mathbb{Z}_7$ given by
\[
\left[\begin{array}{rrr|rrr}
1 & 2 & 2 & 3 & 6 & 6\\
3 & 1 & 4 & 2 & 3 & 5 \\
1 & 6 & 1 & 3 & 4 & 3
\end{array}\right].
\]
Then the knotoids in the table at \cite{BA} have the following
biquandle bracket matrix values.
\[
\begin{array}{r|l|r|l}\Phi_X^{\beta}(K) & K &\Phi_X^{\beta}(K) & K \\ \hline
& & & \\
\left[\begin{array}{ccc}1 & 0 & 0 \\0 & 1 & 0 \\0 & 0 & 1\end{array}\right] &
\begin{array}{l}
2.1, 4.4, 4.5, 5.5, 5.10, 5.11, \\
5.12, 5.13, 5.15, 5.26 \end{array}  & 
\left[\begin{array}{ccc}0 & 0 & 0 \\0 & 0 & 0 \\0 & 0 & 3\end{array}\right] & 
5.14, 5.16 \\ & \\
\left[\begin{array}{ccc}u & 0 & 0 \\0 & u & 0 \\0 & 0 & u\end{array}\right] & 5.9 & 
\left[\begin{array}{ccc}u & 0 & 0 \\0 & u^2 & 0 \\0 & 0 & u^4\end{array}\right] & 5.27\\ & \\
\left[\begin{array}{ccc}u & 0 & 0 \\0 & u^4 & 0 \\0 & 0 & u^2\end{array}\right] & 3.1,4.3 & 
\left[\begin{array}{ccc}u^2 & 0 & 0 \\0 & u^2 & 0 \\0 & 0 & u^2\end{array}\right] & 5.6, 5.21\\ & \\
\left[\begin{array}{ccc}u^2 & 0 & 0 \\0 & u^4 & 0 \\0 & 0 & u\end{array}\right] & 5.22 & 
\left[\begin{array}{ccc}u^3 & 0 & 0 \\0 & u^3 & 0 \\0 & 0 & u^3\end{array}\right] & 5.18\\ & \\
\end{array}\]\[\begin{array}{r|l|r|l}\Phi_X^{\beta}(K) & K & \Phi_X^{\beta}(K) & K \\ \hline  & \\
\left[\begin{array}{ccc}u^3 & 0 & 0 \\0 & u^6 & 0 \\0 & 0 & u^5\end{array}\right] & 5.3, 5.20 & 
\left[\begin{array}{ccc}u^4 & 0 & 0 \\0 & u & 0 \\0 & 0 & u^2\end{array}\right] & 5.19\\ & \\
\left[\begin{array}{ccc}u^4 & 0 & 0 \\0 & u^2 & 0 \\0 & 0 & u\end{array}\right] & 5.28  & 
\left[\begin{array}{ccc}u^4 & 0 & 0 \\0 & u^4 & 0 \\0 & 0 & u^4\end{array}\right] & 5.30\\ & \\
\left[\begin{array}{ccc}u^5 & 0 & 0 \\0 & u^5 & 0 \\0 & 0 & u^5\end{array}\right] & 4.8  & 
\left[\begin{array}{ccc}u^6 & 0 & 0 \\0 & u^5 & 0 \\0 & 0 & u^3\end{array}\right] & 4.9, 5.29\\ & \\
\left[\begin{array}{ccc}0 & 0 & 0 \\0 & 0 & 0 \\0 & 0 & u+u^2+u^4\end{array}\right] & 4.1, 4.2, 5.7, 5.8  & 
\left[\begin{array}{ccc}0 & 0 & 0 \\0 & 0 & 0 \\0 & 0 & u^3+u^5+u^6\end{array}\right] & 5.23\\  & \\
\left[\begin{array}{ccc}3 & 0 & 0 \\0 & 3 & 0 \\0 & 0 & 3\end{array}\right] & 5.1, 5.2  & 
\left[\begin{array}{ccc}3u & 0 & 0 \\0 & 3u & 0 \\0 & 0 & 3u\end{array}\right] & 4.7\\  & \\
\left[\begin{array}{ccc}3u^2 & 0 & 0 \\0 & 3u^4 & 0 \\0 & 0 & 3u\end{array}\right] & 4.6  & 
\left[\begin{array}{ccc}3u^3 & 0 & 0 \\0 & 3u^6 & 0 \\0 & 0 & 3u^5\end{array}\right] & 5.25\\  & \\
\left[\begin{array}{ccc}3u^5 & 0 & 0 \\0 & 3u^6 & 0 \\0 & 0 & 3u^3\end{array}\right] & 5.24  & 
\left[\begin{array}{ccc}3u^6 & 0 & 0 \\0 & 3u^6 & 0 \\0 & 0 & 3u^6\end{array}\right] & 5.17\\ & \\
\end{array}\]
This example shows that $\Phi_X^{\beta}$ is a stronger invariant than either
the coloring matrix or the biquandle bracket polynomial alone, both of which
in turn are stronger than the biquandle counting invariant. Specifically, the 
knotoids 3.1 and 5.27 have the same counting invariant value 
$\Phi_X^{\mathbb{Z}}=3$ and biquandle counting matrix value (the $3\times 3$ 
identify matrix) as the unknotoid, and both have the same (nontrivial)
biquandle bracket polynomial value $u+u^2+u^4$, but they are
distinguished by their bracket matrices. Many other similar examples can be 
found in the tables.
\end{example}

\section{\large\textbf{Questions}}\label{Q}

We end with some questions for future research. 

In \cite{T}, generalizations of the Jones polynomial for knotoids in $S^2$ and $\mathbb{R}^2$ are 
given by considering the intersection numbers of circular state components with components obtained by closing the open-ended components and if the open-ended state components are nested by circular components or not. What happens when we apply 
this approach to the bases of knotoid biquandle brackets?

In \cite{GN} a longitude is used to enhance the biquandle coloring matrix
for knotoids. What happens when we combine this information with the biquandle
bracket information?

\bibliography{ng-sn-no}{}
\bibliographystyle{abbrv}

\bigskip
\noindent
\textsc{Department of Mathematics\\
Izmir Institute of Technology\\
G\"ulbahce Mah. 35430 \\
Urla, Izmir Turkey}
\bigskip

\noindent
\textsc{Department of Mathematical Sciences \\
Claremont McKenna College \\
850 Columbia Ave. \\
Claremont, CA 91711} 

\bigskip

\noindent
\textsc{Department of Teacher Education \\
Shumei University \\
1-1 Daigaku-cho, Yachiyo \\
Chiba Prefecture 276-0003, Japan}

\end{document}